\definecolor{webgreen}{RGB}{0,0,1}
\definecolor{recrown}{RGB}{1,.2,.6}
\begin{document}
\newtheorem{theorem}{Theorem}
\newtheorem{corollary}[theorem]{Corollary}
\newtheorem{lemma}[theorem]{Lemma}
\theoremstyle{definition}
\newtheorem{example}{Example}
\newtheorem*{examples}{Examples}
\newtheorem*{notation}{Notation}
\newtheorem*{remark}{Remark}
\newtheorem{thmx}{Theorem}
\renewcommand{\thethmx}{\text{\Alph{thmx}}}
\newtheorem{lemmax}[thmx]{Lemma}
\renewcommand{\thelemmax}{\text{\Alph{lemmax}}}
\leftmargin=.5in
\rightmargin=0.5in
\theoremstyle{definition}
\newtheorem*{definition}{Definition}
\title[]{\bf Irreducibility via location of zeros}
\author{Jitender Singh$^{1,*}$ {\large \orcidlink{0000-0003-3706-8239}}}
\address[1]{Department of Mathematics,
Guru Nanak Dev University, Amritsar-143005, India\linebreak
 {\tt sonumaths@gmail.com}}
\author{Sanjeev Kumar$^{2}$ {\large \orcidlink{0000-0001-6882-4733}}}
\address[2]{Department of Mathematics,
SGGS College, Sector-26, Chandigarh-160019, India\linebreak
{\tt sanjeev\_kumar\_19@yahoo.co.in}}
\date{}
\parindent=0cm
\footnotetext[1]{Corresponding Author email(s): {\tt sonumaths@gmail.com; jitender.math@gndu.ac.in}\\

2010MSC: {Primary 12E05; 30C15; 11C08; 26C10; 26D05; 30A10}\\

\emph{Keywords}: Polynomials; Integer coefficients; Location of zeros; Inequalities, Irreducibility
}
\maketitle
\begin{abstract}
In this paper, we obtain several new classes of irreducible polynomials having integer coefficients whose zeros lie inside an open disk around the origin or outside a closed annular region in the complex plane. Such irreducible polynomials are devised by imposing Perron--type sufficiency conditions on their coefficients, as well as conditions on the prime factorization of the value that they assume at an integral argument.
\end{abstract}
\section{Introduction.}
The classical irreducibility criteria  due to Sch\"onemann (1846), Eisenstein (1850), Dumas (1906), and Perron (1907) have become paradigm for testing irreducibility of polynomials having rational coefficients. In fact irreducible polynomials having integer coefficients can be used to construct finite-fields, a fundamental tool in cryptography and computer science. The realm revealing riveting facts about the irreducibility of polynomials over prescribed domains has always been the cradle of such crucial classical results which for decades have witnessed cogent extensions and generalizations. Such irreducibility criteria have exhibited a close affinity to prime numbers and primality as is evident from the illustrious Buniakowski's conjecture of 1854 which asserts that if $f$ is an irreducible polynomial having integer coefficients such that the elements in the set $f(\mathbb N)$ have no common factors other than $1$, then the set $f(\mathbb N)$ contains infinitely many  prime numbers. The converse of Buniakowski's conjecture holds affirmatively via primality.

\quad Several classical irreducibility criteria for polynomials $f$ with integer coefficients rely on information on the prime factorization of $f(n)$ for some integer argument $n$. One may find the first such results in papers by St\"ackel \cite{Stackel},  P\'{o}lya and Szeg\"{o} \cite{P}, Weisner \cite{Weisner}, Ore \cite{Ore}, and Dorwart \cite{Dorwart}.
Another classical irreducibility result due to A. Cohn \cite[p.~133]{P} states that if a prime number can be expressed in base 10 as
$\sum_{i=0}^m a_i10^i$ for some positive integer $m$, then the polynomial $\sum_{i=0}^m a_iz^i$ is irreducible in $\mathbb{Z}[z]$. Cohn's result was then generalized to an arbitrary base by Brillhart et al. \cite{B} and further by Bonciocat  et al. \cite{Bonciocat2009} and \cite{Bonciocat}. In \cite{Mu}, Murty used information on the location in the complex plane of the zeros of a given polynomial as a tool to provide an elementary proof of Cohn's irreducibility criterion.
Interestingly, one of the main results of Murty \cite{Mu}, generalized by Girstmair \cite{G}  apprised of a strong converse of Buniakowski's conjecture. The irreducibility criterion of Girstmair \cite{G} was further generalized by the authors in \cite{J-S-2}. In all these results, finding information on the location of the roots of a given polynomial $f$ plays a significant role in proving its irreducibility. In this respect, Perron (1907) proved that if
$f=a_0+a_1z+\cdots+a_{m-2}z^{m-2}+a_{m-1}z^{m-1}+z^m \in \mathbb{Z}[z]$, $m\geq 2$  is such that
\begin{eqnarray*}
|a_{m-1}| &>& 1+|a_{m-2}|+|a_{m-3}|+\cdots+|a_1|+|a_0|,
\end{eqnarray*}
then $f$ is irreducible in $\mathbb{Z}[z]$. As an application of Rouch\'e's Theorem (see\cite[p.~153]{ahlfors}), the aforementioned inequality reveals that exactly $m-1$ zeros of $f$ lie in the interior of the unit disk  $|z|\leq 1$ in the complex plane and the remaining 
zero of $f$ lies outside the unit disk. The irreducibility of $f$ can be deduced as follows. On the contrary, assume that $f(z)=f_1(z)f_2(z)$ for nonconstant polynomials $f_1$ and $f_2$ in $\mathbb{Z}[z]$. Then all zeros of one of $f_1$ or $f_2$ lie in the interior of the unit circle in the complex plane. Assume that each zero $\theta$ of $f_1$ satisfies $|\theta|<1$. Then expressing $f_1$ as $f_1(z)=\prod_{\theta} (z-\theta)$, where the product is over all zeros of $f_1$, we have
\begin{eqnarray*}
1\leq |f_1(0)|=\prod_{\theta} |\theta|<1,
\end{eqnarray*}
which is a contradiction, and so, $f$ must be irreducible.  
On these lines, one has the following class of irreducible polynomials whose root location has been investigated in \cite{Miller1971} but here, we provide an alternative approach for the root location.
\begin{thmx}\label{th:A}
{\em For each natural number $m\geq 2$, the polynomial
  \begin{eqnarray*}
f(z)=z^m-z^{m-1}-\cdots-z-1
  \end{eqnarray*}
 is irreducible in $\mathbb Z[z]$.}
\end{thmx}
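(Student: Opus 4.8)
The plan is to combine a sharp location of the zeros of $f$ with the multiplicativity argument already sketched in the introduction for Perron's theorem. The first step is to clear the geometric-series denominator: writing $f(z)=z^m-\tfrac{z^m-1}{z-1}$ and multiplying through by $z-1$ yields the factorization
\[
(z-1)f(z)=z^{m+1}-2z^m+1=:g(z).
\]
Since $f(1)=1-m\neq 0$ for $m\ge 2$, the zeros of $f$ are exactly the zeros of $g$ other than the simple zero $z=1$, and every zero $\theta\neq 1$ of $g$ satisfies $\theta^m(2-\theta)=1$. I would use this last relation as the engine of the whole argument.

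Taking moduli in $\theta^m(2-\theta)=1$ shows that any zero with $|\theta|\ge 1$ must satisfy $|2-\theta|\le 1$, so it lies in the closed disk $\{\,|z-2|\le 1\,\}$; in particular $\operatorname{Re}\theta\ge 1$. Next I would rule out zeros on the unit circle: if $|\theta|=1$ then $|2-\theta|=1$ as well, and the circles $|z|=1$ and $|z-2|=1$ meet only at $z=1$, which is not a zero of $f$. Hence no zero of $f$ lies on $|z|=1$, and every zero with $|\theta|\ge 1$ in fact has $|\theta|>1$. To count these I would substitute $w=2-\theta$, turning the relation into $\psi(w)=1$ with $\psi(w):=w(2-w)^m$, and observe that $|\theta|>1$ corresponds exactly to $|w|<1$. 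Thus the number of zeros of $f$ outside the unit circle equals the number of solutions of $w(2-w)^m=1$ in the open unit disk. Since $\psi$ has a single (simple) zero there, at $w=0$, I would compare $\psi(w)$ with the constant $1$ on $|w|=1$ via Rouch\'e's theorem, using $|\psi(w)|=|2-w|^m\ge(2-|w|)^m=1$. This forces $w(2-w)^m-1$ to have exactly one zero in $|w|<1$, namely $w=2-\alpha$, where $\alpha\in(1,2)$ is the real zero supplied by $f(1)=1-m<0<1=f(2)$. Consequently $f$ has exactly one zero, $\alpha$, outside the unit circle and the remaining $m-1$ zeros strictly inside.

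The delicate point, and the step I expect to cost the most care, is that the inequality $|2-w|\ge 1$ on $|w|=1$ is \emph{not} strict: equality occurs at $w=1$, which is precisely the boundary zero of $w(2-w)^m-1$ arising from the discarded factor $z-1$ (one checks $w=1$ is a simple zero, since the derivative there is $1-m\neq 0$). A naive application of Rouch\'e is therefore illegitimate. I would resolve this by indenting the contour with a small circular arc around $w=1$, noting that $|\psi(w)|>1$ strictly off $w=1$ so that the indented contour carries a valid Rouch\'e comparison, and then verifying that the indentation, which bulges into $|w|<1$, encloses no extra zero; letting the indentation radius tend to $0$ delivers the exact count.

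With the location settled, irreducibility follows as in Perron's argument. Suppose $f=f_1f_2$ with nonconstant $f_1,f_2\in\mathbb Z[z]$; as $f$ is monic, both factors may be taken monic. Because $f$ has no zero on $|z|=1$ and only the single zero $\alpha$ outside it, one factor, say $f_1$, has all of its zeros in $|z|<1$. Writing $f_1(z)=\prod_{\theta}(z-\theta)$ then gives $|f_1(0)|=\prod_{\theta}|\theta|<1$; but $f_1(0)$ is a nonzero integer (since $f(0)=-1$), so $|f_1(0)|\ge 1$, a contradiction. Hence $f$ is irreducible in $\mathbb Z[z]$.
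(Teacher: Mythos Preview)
Your proof is correct and takes a genuinely different route from the paper. The paper pins down the large real zero $\zeta$ sharply via Bernoulli's inequality, showing $2-2^{-(m-1)}<\zeta<2$, then factors $f(z)=(z-\zeta)g(z)$ and explicitly computes the (positive) coefficients of $g$; it appeals to a result of Aziz and Mohammad on polynomials with positive coefficients to bound all zeros of $g$ inside $|z|<1$, and the precise lower bound on $\zeta$ is what makes those bounds collapse below~$1$. Your argument instead exploits the algebraic identity $(z-1)f(z)=z^{m+1}-2z^m+1$ and the resulting relation $\theta^m(2-\theta)=1$, then counts the zeros outside the unit circle by a Rouch\'e comparison after the substitution $w=2-\theta$. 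What the paper's approach buys is a quantitatively sharper location of the real root and a direct bound on every other root, at the cost of importing an external theorem; what your approach buys is self-containment (only Rouch\'e is needed) and a cleaner structural explanation of why exactly one zero escapes the unit disk, at the cost of the boundary subtlety at $w=1$, which you handle correctly by indenting inward where $|\psi(w)|>1$ persists and noting that the simple zero of $\psi-1$ at $w=1$ is isolated, so the indentation excludes no interior zero. One cosmetic remark: when you say the indentation ``encloses no extra zero'' you mean it \emph{excludes} no zero of $\psi-1$ from the open disk; the phrasing could be tightened.
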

\begin{proof}
Using Bernoulli's inequality and the fact that $4\leq 2m\leq 2^m$, we have
\begin{eqnarray*}
\Bigl(1-\frac{1}{2^{m-1}}\Bigr)f\Bigl(2-\frac{1}{2^{m-1}}\Bigr)
= 1-2\Bigl(1-\frac{1}{2^{m}}\Bigr)^m &<& 1-2\Bigl(1-\frac{m}{2^{m}}\Bigr)
= -1+\frac{2m}{2^{m}}\leq0,
\end{eqnarray*}
which shows that $f(2-(1/2^{m-1}))<0$. Since $f(2)=1>0$, it follows that a real zero $\zeta$ of $f$ lies between $2-1/2^{m-1}$ and $2$.
If $m$ is odd, then $\zeta$ is the only real zero of $f$. If $m$ is even, then $f$ has another real zero which lies between $-1$ and $0$ since then $f(-1)=1$ and $f(0)=-1$. In either case, exactly  one real zero $\zeta$ of $f$ satisfies  $(2-1/2^{m-1})<\zeta<2$. Consequently, one has $f(z)=(z-\zeta)g(z)$, where the degree $m-1$ polynomial $g\in \mathbb{R}[z]$ is given by
\begin{eqnarray*}\label{ex2}
    g=\frac{1}{\zeta}+\Bigl(\frac{1}{\zeta}+\frac{1}{\zeta^2}\Bigr)z+\cdots+\Bigl(\frac{1}{\zeta}+\frac{1}{\zeta^2}+
    \cdots+\frac{1}{\zeta^{m-1}}\Bigr)z^{m-2}+z^{m-1}.
\end{eqnarray*}
Since $g$ is a polynomial having real positive coefficients, by an easily derivable result of Aziz and Mohammad \cite[Theorem B]{A-M}, we may observe that each  zero $\theta$ of $g$ satisfies
\begin{eqnarray*}
|\theta|\leq \max\Bigl\{1-\frac{(\zeta-1)}{\zeta^{m-1}-1},~\frac{1}{\zeta-1}\Bigl(1-\frac{1}{\zeta^{m-1}}\Bigr)\Bigr\}.
\end{eqnarray*}
Clearly, $0\leq 1-\frac{(\zeta-1)}{\zeta^{m-1}-1}<1$. Since $(2-1/2^{m-1})<\zeta<2$, we have $1-1/2^{m-1}<\zeta-1<1$, and $1-(1/\zeta^{m-1})<1-1/2^{m-1}$, and so, we get
\begin{eqnarray*}
\frac{1}{\zeta-1}\Bigl(1-\frac{1}{\zeta^{m-1}}\Bigr)<\frac{1}{1-1/2^{m-1}}\times \Bigl(1-\frac{1}{2^{m-1}}\Bigr)=1.
\end{eqnarray*}
Thus, except $\zeta$, all other zeros of $f$ lie in the interior of the unit disk $|z|<1$ in the complex plane. Now the irreducibility of $f$ is immediate.
\end{proof}
Perron \cite{Perron} also provided a multivariate version of his irreducibility criterion for polynomials over arbitrary fields. The reader may find a detailed proof of this multivariate version in \cite{Bonciocat2}. Recently, the first author and Garg proved in \cite{JRG2023} a factorization result on a class of polynomials with integer coefficients, which also generalize the irreducibility criterion of Perron, and in  \cite{JSSK2021} the authors proved some irreducibility criteria for polynomials with integers coefficients for which either the constant term or the  leading coefficient is divisible by a prime power, by using information on the location of their zeros.

\quad In this paper we shall approach the irreducibility of polynomials with integer coefficients by devising some Perron--type sufficiency conditions on their coefficients, combined with conditions on the prime factorization of their values at an integral argument. Recall that a polynomial having integer coefficients is called primitive if the greatest common divisor of its coefficients is 1. In what follows, $f^{(i)}(z)$ will as usual stand for the $i$-th derivative of $f$ with respect to $z$. Our main results are the following:
\begin{theorem}\label{th:3}
Let $f=a_0+a_1 z+\cdots+a_mz^m\in \Bbb{Z}[z]$ be primitive. Suppose there exists a positive real number $\alpha$ such that
\begin{eqnarray*}
|a_m| \alpha^m>|a_0|+|a_1|\alpha+\cdots+|a_{m-1}|\alpha^{m-1}.
\end{eqnarray*}
Further, if there exist natural numbers $n$, $d$, $k$, $\ell \leq m$, and a prime $p\nmid d$ such that
$n\geq \alpha+d$, $f(n)=\pm p^k d$, $\gcd(k,\ell)=1$, $p^k \mid \frac{f^{(i)}(n)}{i!}$ for each index $i=0, 1, \ldots,\ell-1$, and for $k>1$,  also $p\nmid \frac{f^{(\ell)}(n)}{\ell!}$, then the polynomial $f$ is irreducible in $\Bbb{Z}[z]$.
\end{theorem}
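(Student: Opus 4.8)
The plan is to read off the location of the zeros from the Perron--type inequality and then run a $p$-adic (Newton polygon) analysis of $f$ at the point $n$. First I would pin down the zeros: on the circle $|z|=\alpha$ the hypothesis gives $|a_mz^m|=|a_m|\alpha^m>\sum_{i=0}^{m-1}|a_i|\alpha^i\ge|a_0+a_1z+\cdots+a_{m-1}z^{m-1}|$, so by Rouch\'e's theorem \cite[p.~153]{ahlfors} the polynomial $f$ has the same number of zeros in $|z|<\alpha$ as $a_mz^m$, namely all $m$ of them. Hence every zero $\theta$ of $f$ satisfies $|\theta|<\alpha$.

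Next, suppose for contradiction that $f=gh$ with $g,h\in\mathbb{Z}[z]$ nonconstant; since $f$ is primitive, Gauss' lemma makes both $g$ and $h$ primitive. Writing $g(z)=b_g\prod_\theta(z-\theta)$ over its zeros (all with $|\theta|<\alpha$) and using $n\ge\alpha+d$, each factor satisfies $|n-\theta|\ge n-|\theta|>n-\alpha\ge d$, so $|g(n)|>d^{\deg g}$ and likewise $|h(n)|>d^{\deg h}$. Now $g(n)h(n)=f(n)=\pm p^kd$ with $p\nmid d$. If $p\nmid h(n)$ then $h(n)\mid d$, giving $|h(n)|\le d\le d^{\deg h}<|h(n)|$, a contradiction; the same argument applies to $g$. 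Therefore $p\mid g(n)$ and $p\mid h(n)$, so with $a=v_p(g(n))$ and $b=v_p(h(n))$ (the $p$-adic valuations) we get $a,b\ge1$ and $k=a+b\ge2$. In particular the case $k=1$ is already impossible, and from here on $k\ge2$, so the hypothesis $p\nmid f^{(\ell)}(n)/\ell!$ is available.

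Now I would pass to $F(z)=f(z+n)=\sum_{i=0}^m c_iz^i$ with $c_i=f^{(i)}(n)/i!\in\mathbb{Z}$; this substitution preserves content and (ir)reducibility, so $F$ is primitive and it suffices to factor $F$. The hypotheses translate to $v_p(c_0)=k$, $v_p(c_i)\ge k$ for $1\le i\le\ell-1$, and $v_p(c_\ell)=0$. Consequently the lower convex hull of the points $\{(i,v_p(c_i))\}$ begins with the single segment from $(0,k)$ to $(\ell,0)$ of slope $-k/\ell$, and $\ell$ is the least index with $v_p(c_\ell)=0$. By the theory of Newton polygons, $F$ therefore has exactly $\ell$ zeros of positive $p$-adic valuation, all equal to $k/\ell$, and $\ell$ equals the order of vanishing at $0$ of the reduction $\overline{F}\in\mathbb{F}_p[z]$.

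Finally, set $G(z)=g(z+n)$ and $H(z)=h(z+n)$, so that $F=GH$, $G(0)=g(n)$, and $H(0)=h(n)$. Since $p\mid G(0)$ and $p\mid H(0)$, we have $z\mid\overline{G}$ and $z\mid\overline{H}$; letting $j_G,j_H\ge1$ be their orders of vanishing at $0$ gives $j_G+j_H=\ell$ and hence $0<j_G<\ell$. Because $G$ and $H$ are primitive their Newton polygons touch height $0$, and their positive-valuation zeros are among those of $F$, hence all of valuation $k/\ell$. Reading off the leftmost edge of the Newton polygon of $G$, which drops from $(0,v_p(G(0)))=(0,a)$ to $(j_G,0)$ with every slope equal to $-k/\ell$, forces $a=j_G\,k/\ell$, that is $\ell\mid j_Gk$; since $\gcd(k,\ell)=1$ this yields $\ell\mid j_G$, contradicting $0<j_G<\ell$. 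This contradiction shows no nontrivial factorization exists, so $f$ is irreducible. The step I expect to be the main obstacle is precisely this Newton polygon bookkeeping for possibly non-monic factors: one must justify carefully that the number of positive-valuation zeros of each factor equals the order of vanishing of its reduction and that these zeros all carry valuation exactly $k/\ell$, which is where primitivity of the factors and the coprimality $\gcd(k,\ell)=1$ do the decisive work.
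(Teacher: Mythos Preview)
Your proof is correct and follows the same overall architecture as the paper: locate all zeros of $f$ inside $|z|<\alpha$, shift by $n$, and split into the two cases according to whether $p$ divides one or both of the factors at $0$. The root-location step and the ``$p$ misses one factor'' case are handled identically.

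The one genuine difference is in the ``$p$ divides both factors'' case. The paper black-boxes this step by invoking Lemma~\ref{L1} (from \cite{JSSK2021}), which immediately yields $p\mid s_\ell$ and hence the desired contradiction. You instead unpack this lemma into a direct Newton-polygon argument over $\mathbb{Q}_p$: the first edge of the polygon of $F$ runs from $(0,k)$ to $(\ell,0)$, the polygons of $G$ and $H$ concatenate by slope to give that of $F$, and coprimality of $k$ and $\ell$ forces the horizontal length of the negative-slope part of each factor to be a multiple of $\ell$. This is exactly the mechanism behind Lemma~\ref{L1}; indeed, the paper's own Remark shows that Lemma~\ref{L1} is equivalent to the Newton-polygon result of Bonciocat et al.\ (Lemma~\ref{L2}). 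So your route is not really different in substance, only in that you carry out the $p$-adic bookkeeping explicitly rather than citing it. The payoff is a self-contained proof; the cost is the careful justification you flag at the end, namely that for a primitive factor $G$ the minimum of its Newton polygon is $0$, occurs at the index $j_G$ where $\overline{G}$ first has a unit coefficient, and that the drop from $(0,a)$ to $(j_G,0)$ is entirely along slope $-k/\ell$. That justification is routine but should be spelled out.
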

\begin{theorem}\label{th:4}
Let $f=a_0+a_1 z+\cdots+a_mz^m\in \Bbb{Z}[z]$ be primitive. Suppose there exist positive real numbers $\alpha$ and $\beta$ with $\alpha<\beta$ and an index $j\in\{0,1,\ldots,m\}$ such that
\begin{eqnarray*}
|a_j| \alpha^j>\sum_{i\neq j,~i=0}^m |a_i|\beta^i.
\end{eqnarray*}
Further, if there exist natural numbers $n$, $d$, $k$, $\ell\leq m$, and a prime $p\nmid d$ such that $\beta-d\geq n\geq \alpha+d$, $f(n)=\pm p^k d$, $\gcd(k,\ell)=1$, $p^k \mid \frac{f^{(i)}(n)}{i!}$ for each index $i=0, 1, \ldots,\ell-1$, and for $k>1$,  also $p\nmid \frac{f^{(\ell)}(n)}{\ell!}$, then the polynomial $f$ is irreducible in $\Bbb{Z}[z]$.
\end{theorem}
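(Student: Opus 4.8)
The plan is to combine an Archimedean root-location argument (via Rouch\'e's theorem, exactly as in the proof of Perron's criterion and of Theorem \ref{th:3}) with a non-Archimedean, Newton-polygon analysis of the translate $f(n+w)$. First I would pin down the location of the zeros by comparing $f$ with its dominant term $a_jz^j$ on the two circles $|z|=\alpha$ and $|z|=\beta$. On $|z|=\alpha$ the hypothesis gives
\[
\sum_{i\neq j}|a_i|\alpha^i \le \sum_{i\neq j}|a_i|\beta^i < |a_j|\alpha^j = |a_jz^j|,
\]
and on $|z|=\beta$ it gives $\sum_{i\neq j}|a_i|\beta^i < |a_j|\alpha^j \le |a_j|\beta^j = |a_jz^j|$; in both cases $|f(z)-a_jz^j|<|a_jz^j|$, so $f$ has exactly $j$ zeros in $|z|<\alpha$ and exactly $j$ zeros in $|z|<\beta$. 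Hence the closed annulus $\alpha\le|z|\le\beta$ is free of zeros, while $n$ lies in its interior since $\alpha<\alpha+d\le n\le\beta-d<\beta$. Consequently every zero $\theta$ of $f$ satisfies $|n-\theta|>d$: if $|\theta|<\alpha$ then $|n-\theta|\ge n-|\theta|>n-\alpha\ge d$, and if $|\theta|>\beta$ then $|n-\theta|\ge|\theta|-n>\beta-n\ge d$.

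Next, suppose for contradiction that $f=f_1f_2$ with $f_1,f_2\in\mathbb Z[z]$ nonconstant; by Gauss's lemma I may take both primitive, and since translation preserves content, $g_i(w):=f_i(n+w)$ remain primitive. From $f_1(n)f_2(n)=\pm p^kd$ and the bound just obtained, $|f_i(n)|=|\mathrm{lead}(f_i)|\prod_\theta|n-\theta|>d$ for each $i$, so writing $f_i(n)=\pm p^{k_i}d_i$ with $p\nmid d_i$ I get $p^{k_i}>d/d_i\ge1$, that is $k_i\ge1$ for both factors. If $k=1$ this already contradicts $k_1+k_2=k$, so I may assume $k\ge2$.

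For $k\ge2$ I would turn to the $p$-adic Newton polygon of $g(w)=f(n+w)=\sum_i b_iw^i$, where $b_i=f^{(i)}(n)/i!\in\mathbb Z$. The hypotheses $p^k\mid b_i$ for $i<\ell$, together with $v_p(b_0)=k$ (as $f(n)=\pm p^kd$, $p\nmid d$) and $p\nmid b_\ell$, place the initial vertex at $(0,k)$ and force a single segment of slope $-k/\ell$ down to $(\ell,0)$, the intermediate points lying strictly above this line; since $b_i\in\mathbb Z$ the polygon never drops below height $0$, so this is the only segment of negative slope. Because $\gcd(k,\ell)=1$ the segment is pure, so its $\ell$ roots, each of valuation $k/\ell$, form a single Galois orbit over $\mathbb Q_p$ and constitute an irreducible factor of degree $\ell$ (the Eisenstein--Dumas mechanism). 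In the factorization $g=g_1g_2$ over $\mathbb Q_p$ this irreducible factor divides one of $g_1,g_2$, say $g_1$; then every root of the other factor $g_2$ has valuation $\le0$.

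The contradiction now comes from primitivity. Since all roots of $g_2$ have nonpositive valuation, the Newton polygon of $g_2$ is nondecreasing, starting at $(0,v_p(g_2(0)))=(0,k_2)$ with $k_2\ge1$; hence every coefficient of $g_2$ has valuation $\ge k_2\ge1$, so $p$ divides all coefficients of $g_2$, contradicting its primitivity. Thus no nontrivial factorization exists and $f$ is irreducible. I expect the non-Archimedean step to be the main obstacle: establishing rigorously that $\gcd(k,\ell)=1$ forces the $\ell$ small-valuation roots of $f(n+w)$ into a single irreducible $\mathbb Q_p$-factor, and then dovetailing this with the Archimedean bound $k_2\ge1$ so that primitivity is genuinely violated, is the delicate heart of the proof; the Rouch\'e step, by contrast, is routine once the comparison term $a_jz^j$ has been isolated.
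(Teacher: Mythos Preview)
Your proof is correct, and while the Archimedean root-location step matches the paper's (the paper establishes $|f(z)|>0$ on the closed annulus by a direct inequality rather than invoking Rouch\'e, but the conclusion is identical), the arithmetic half is organized differently. The paper argues by cases on whether $p$ divides both $g_1(0)$ and $g_2(0)$ or only one: in the latter case $|g_1(0)|$ divides $d$, hence $|g_1(0)|\le d$, which the root bound $|\theta|>d$ for the zeros of $g$ immediately contradicts; in the former case $k>1$ and an external lemma (Lemma~\ref{L1} of Singh--Kumar, shown in the paper's Remark to be equivalent to a Newton-polygon lemma of Bonciocat et al.) forces $p\mid s_\ell$, against hypothesis. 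You instead deploy the root bound \emph{up front} to get $|f_i(n)|>d$ for both factors, whence $k_i\ge1$ for both, so the paper's second case simply never occurs; then a self-contained Newton-polygon argument finishes: the pure segment of slope $-k/\ell$ with $\gcd(k,\ell)=1$ yields an irreducible $\mathbb Q_p$-factor of degree $\ell$, which must lie entirely inside one $g_i$, leaving the other with only nonpositive-valuation roots and hence a nondecreasing polygon starting at height $k_2\ge1$, contradicting its primitivity. Your route eliminates the case split and the appeal to an external lemma, trading them for the standard Eisenstein--Dumas fact on pure segments; the paper's route is more modular, packaging the $p$-adic step as a reusable black box.
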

 A slightly different condition on
$|a_j|\alpha^j$ allows one to reach the same conclusion, as in the following result.
\begin{theorem}\label{th:5}
Let $f=a_0+a_1 z+\cdots+a_mz^m\in \Bbb{Z}[z]$ be primitive. Suppose there exist positive real numbers $\alpha$ and $\beta$ with $\alpha<\beta$ and an index $j\in\{0,1,\ldots,m\}$ such that
\begin{eqnarray*}
|a_j|\alpha^j >\bigl({\beta}/{\alpha}\bigr)^{m-j}\sum_{i\neq j,~i=0}^m |a_i|\alpha^i.
\end{eqnarray*}
Further, if there exist natural numbers $n$, $d$, $k$, $\ell\leq m$, and a prime $p\nmid d$ such that $\beta-d\geq n\geq \alpha+d$, $f(n)=\pm p^k d$, $\gcd(k,\ell)=1$, $p^k \mid \frac{f^{(i)}(n)}{i!}$ for each index $i=0, 1, \ldots,\ell-1$, and for $k>1$,  also $p\nmid \frac{f^{(\ell)}(n)}{\ell!}$, then the polynomial $f$ is irreducible in $\Bbb{Z}[z]$.
\end{theorem}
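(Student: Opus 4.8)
The plan is to split the argument into a complex-analytic part that pins down the location of the zeros of $f$ and a $p$-adic part that constrains any hypothetical factorization, exactly as for Theorems \ref{th:3} and \ref{th:4}. Only the zero-location step uses the new coefficient hypothesis, so that is where the genuinely new computation lies, while the $p$-adic step is common to all three theorems.

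First I would apply Rouch\'e's theorem on the two circles $|z|=\alpha$ and $|z|=\beta$. On $|z|=\alpha$ the factor $(\beta/\alpha)^{m-j}\ge 1$ makes the hypothesis stronger than $|a_j|\alpha^j>\sum_{i\ne j}|a_i|\alpha^i$, so $a_jz^j$ dominates and $f$ has exactly $j$ zeros in $|z|<\alpha$. On $|z|=\beta$ I would write $\sum_{i\ne j}|a_i|\beta^i=(\beta/\alpha)^j\sum_{i\ne j}|a_i|\alpha^i(\beta/\alpha)^{i-j}$ and use $(\beta/\alpha)^{i-j}\le(\beta/\alpha)^{m-j}$ for every $i\le m$ (valid since $\beta/\alpha>1$) to bound this above by $(\beta/\alpha)^j(\beta/\alpha)^{m-j}\sum_{i\ne j}|a_i|\alpha^i<(\beta/\alpha)^j|a_j|\alpha^j=|a_j|\beta^j$. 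Hence $a_jz^j$ dominates on $|z|=\beta$ as well, giving exactly $j$ zeros in $|z|<\beta$. Consequently $f$ has $j$ zeros in $|z|<\alpha$, the other $m-j$ zeros in $|z|>\beta$, and none in the closed annulus $\alpha\le|z|\le\beta$. This single chain of inequalities is the only place Theorem \ref{th:5} diverges from Theorem \ref{th:4}, and I expect it to be routine rather than the crux.

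Using $\alpha+d\le n\le\beta-d$ with $d\ge 1$, I would next observe that every zero $\theta$ of $f$ satisfies $|n-\theta|>d$: for a zero with $|\theta|<\alpha$ one has $|n-\theta|\ge n-|\theta|>n-\alpha\ge d$, and for a zero with $|\theta|>\beta$ one has $|n-\theta|\ge|\theta|-n>\beta-n\ge d$. Therefore every nonconstant factor $h\in\mathbb{Z}[z]$ of $f$ of degree $r$ obeys $|h(n)|=|\mathrm{lc}(h)|\prod_\theta|n-\theta|>d^{\,r}\ge d$.

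Finally comes the $p$-adic step, which I regard as the real technical heart. Assuming $f=f_1f_2$ with both factors nonconstant (and, by Gauss's lemma, primitive), I would pass to $g(w)=f(n+w)=g_1(w)g_2(w)$, whose $i$-th coefficient is $f^{(i)}(n)/i!=c_i$, so that $v_p(c_0)=k$, $v_p(c_i)\ge k$ for $i<\ell$, and, when $k>1$, $v_p(c_\ell)=0$. These force the first segment of the $p$-adic Newton polygon of $g$ to run from $(0,k)$ to $(\ell,0)$ with slope $-k/\ell$; since $\gcd(k,\ell)=1$ this segment corresponds to a factor of degree $\ell$ irreducible over $\mathbb{Q}_p$, so all $\ell$ roots of valuation $k/\ell$ lie in one of $g_1,g_2$, say $g_1$. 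The Newton polygon of $g_1$ then starts at height $v_p(g_1(0))$ and descends by exactly $k$ (these being its only roots of positive valuation) to a minimum that is nonnegative, because all coefficients of $g_1$ are $p$-adic integers; combined with $v_p(g_1(0))+v_p(g_2(0))=k$ this gives $v_p(f_1(n))=k$ and $v_p(f_2(n))=0$ (the subcase $k=1$ being immediate, since then one valuation is already $0$). Thus $|f_1(n)|\ge p^k$, whence $|f_2(n)|\le |f(n)|/p^k=d$, contradicting $|f_2(n)|>d^{\deg f_2}\ge d$ from the previous paragraph. The step I would watch most carefully is this Newton-polygon bookkeeping, in particular justifying that the descent is exactly $k$ and the minimum height is nonnegative even when $p\mid a_m$, since that is precisely what turns $\gcd(k,\ell)=1$ into the clean dichotomy $v_p(f_i(n))\in\{0,k\}$.
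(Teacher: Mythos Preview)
Your proof is correct and follows the same two-part strategy as the paper: first show that $f$ has no zeros in the closed annulus $\alpha\le|z|\le\beta$, then derive a contradiction from any putative factorization via the $p$-adic information at $z=n$. The only differences are presentational: the paper obtains the zero-free annulus by the direct estimate $|f(z)|\ge \alpha^{m}\bigl(|a_j|/\beta^{m-j}-\sum_{i\ne j}|a_i|/\alpha^{m-i}\bigr)>0$ rather than by Rouch\'e, and for the $p$-adic step it invokes Lemma~\ref{L1} (which, as the paper's own Remark shows, is equivalent to the Newton-polygon statement you spell out) to rule out the case $p\mid g_1(0)$ and $p\mid g_2(0)$, instead of reading the factor valuations off the polygon directly.
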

 \begin{remark}
If we take $j=m$ in Theorem \ref{th:5}, then the condition on the coefficients of $f$ becomes $|a_m|\alpha^m >\sum_{i\neq j,~i=0}^{m} |a_i|\alpha^i$, which holds  for all $\beta>\alpha$. In view of this, once an $n$ with $n\geq \alpha+d$ is found for which $|f(n)|/d$ is prime or a prime-power, we can take $\beta=n+d$ so that $\beta-d\geq n$ can always be satisfied. Thus, for $j=m$,  Theorem \ref{th:5} reduces to Theorem \ref{th:3}.
\end{remark}
We mention here the following three important particular cases of Theorems \ref{th:3}-\ref{th:5}, respectively.
\begin{corollary}\label{th:1}
   Let $f=a_0+ a_{1}z+\cdots+a_m z^m\in \Bbb{Z}[z]$ be  primitive. Suppose there exists a positive real number $\alpha$ such that
\begin{eqnarray*}
|a_m| \alpha^m>|a_0|+|a_1|\alpha+\cdots+|a_{m-1}|\alpha^{m-1}.
\end{eqnarray*}
If there exist natural numbers $n$ and $d$ satisfying $n\geq \alpha+ d$ for which  $|f(n)|/d$ is prime, or $|f(n)|/d$ is a prime-power coprime to $|f'(n)|$, then $f$ is irreducible in $\mathbb{Z}[z]$.
 \end{corollary}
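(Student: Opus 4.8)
The plan is to obtain Corollary \ref{th:1} directly as the special case $\ell=1$ of Theorem \ref{th:3}. The coefficient inequality $|a_m|\alpha^m>|a_0|+|a_1|\alpha+\cdots+|a_{m-1}|\alpha^{m-1}$ and the requirement $n\ge \alpha+d$ are shared verbatim with Theorem \ref{th:3}, so the only task is to produce the prime $p$ and exponents $k,\ell$ and to verify the three arithmetic conditions $\gcd(k,\ell)=1$, $p^k\mid f^{(i)}(n)/i!$ for $0\le i\le \ell-1$, and, when $k>1$, $p\nmid f^{(\ell)}(n)/\ell!$. Throughout I would fix $\ell=1$, which collapses the list of divisibility conditions to the single requirement $p^k\mid f^{(0)}(n)/0!=f(n)$ and makes $\gcd(k,\ell)=\gcd(k,1)=1$ hold automatically for every $k$.

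First I would handle the hypothesis that $|f(n)|/d$ is a prime $p$. Reading $d$ as the cofactor coprime to $p$, so that $f(n)=\pm p\,d$ with $p\nmid d$ and $k=v_p(f(n))=1$, I set $k=1$ and $\ell=1$. Then $\gcd(k,\ell)=1$, the condition $p^k\mid f(n)$ reduces to $p\mid f(n)$ and holds, and because $k=1$ is not greater than $1$ no condition on $f'(n)$ is imposed. Hence every hypothesis of Theorem \ref{th:3} is met and $f$ is irreducible.

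Next I would handle the hypothesis that $|f(n)|/d=p^k$ is a prime power coprime to $|f'(n)|$. Here $f(n)=\pm p^k d$ with $p\nmid d$, and $\gcd(p^k,|f'(n)|)=1$ forces $p\nmid f'(n)$. Keeping $\ell=1$, I again have $\gcd(k,1)=1$ and $p^k\mid f(n)$, while for $k>1$ the supplementary requirement $p\nmid f^{(1)}(n)/1!=f'(n)$ is precisely the assumed coprimality. Thus the hypotheses of Theorem \ref{th:3} are satisfied once more.

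The point requiring care, rather than a genuine obstacle, is the matching of the corollary's two alternative hypotheses to the single $k$-dependent derivative condition of Theorem \ref{th:3}: the \emph{prime} alternative is exactly the case $k=1$, in which the non-divisibility clause $p\nmid f'(n)$ is \emph{not} imposed, whereas the \emph{prime power coprime to} $f'(n)$ alternative supplies that clause precisely when it is needed, namely for $k>1$. One should also read $d$ in the prime alternative as coprime to $p$ (equivalently $p^2\nmid f(n)$), so that $k=1$ is legitimate. Once $\ell=1$ is fixed and these two readings are in place, no computation beyond the above verifications is required.
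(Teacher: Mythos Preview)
Your proposal is correct and matches the paper's approach: the paper explicitly introduces Corollary~\ref{th:1} as a ``particular case'' of Theorem~\ref{th:3} without spelling out the specialization, and taking $\ell=1$ is exactly the intended reduction. Your handling of the two alternatives (prime versus prime power) and the caveat that $d$ should be read as coprime to $p$ in the prime case are the right observations to make the specialization go through.
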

\begin{corollary}\label{th:2}
    Let $f=a_0+ a_{1}z+\cdots+a_m z^m\in \Bbb{Z}[z]$ be primitive. Suppose there exist positive real numbers $\alpha$ and $\beta$ with $\alpha<\beta$ and an index $j\in \{0,1,\ldots,m\}$ such that
\begin{eqnarray*}
|a_j| \alpha^j>\sum_{i\neq j,~i=0}^{m} |a_i|\beta^{i}.
\end{eqnarray*}
If there exist natural numbers $n$ and $d$ satisfying $\beta-d\geq n\geq \alpha+d$ for which  $|f(n)|/d$ is prime, or $|f(n)|/d$ is a prime-power coprime to $|f'(n)|$, then $f$ is irreducible in $\mathbb{Z}[z]$.
 \end{corollary}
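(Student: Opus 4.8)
The plan is to deduce Corollary~\ref{th:2} directly from Theorem~\ref{th:4}, since the two statements differ only in how the arithmetic hypothesis on $f(n)$ is phrased. Indeed, the coefficient inequality $|a_j|\alpha^j>\sum_{i\neq j,~i=0}^{m}|a_i|\beta^i$ and the interval constraint $\beta-d\geq n\geq\alpha+d$ appearing in the corollary are verbatim the corresponding hypotheses of Theorem~\ref{th:4}; so the only work is to exhibit admissible data $k$, $\ell$, and a prime $p\nmid d$ realizing $f(n)=\pm p^{k}d$ together with the required divisibility of the normalized derivatives $f^{(i)}(n)/i!$.

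First I would treat the case in which $|f(n)|/d$ is a prime $p$, read with $d$ coprime to $p$ (so that $p$ divides $f(n)$ exactly to the first power). Writing $f(n)=\pm p\,d$, I set $k=1$ and $\ell=1$. Then $\gcd(k,\ell)=\gcd(1,1)=1$ and $p^{k}=p\mid f(n)=f^{(0)}(n)/0!$, while the supplementary condition $p\nmid f^{(\ell)}(n)/\ell!$ is demanded only for $k>1$ and is therefore vacuous here. Hence all hypotheses of Theorem~\ref{th:4} hold and $f$ is irreducible. Next I would treat the case in which $|f(n)|/d=p^{k}$ is a prime power coprime to $|f'(n)|$. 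Here I keep this $k$ and again set $\ell=1$, so that $\gcd(k,\ell)=\gcd(k,1)=1$ and $p^{k}\mid f(n)=f^{(0)}(n)/0!$. The assumption $\gcd(p^{k},|f'(n)|)=1$ forces $p\nmid f'(n)=f^{(1)}(n)/1!$, which is exactly the extra condition required by Theorem~\ref{th:4} when $k>1$. Thus Theorem~\ref{th:4} applies once more.

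The one point needing care, and the place where the two phrasings must be reconciled, is the hypothesis $p\nmid d$, which guarantees that the exponent attached to $p$ in $f(n)=\pm p^{k}d$ is genuinely the $p$-adic valuation of the quotient rather than an undercount. In the prime-power case this is harmless: I would take $d$ to be the largest divisor of $|f(n)|$ coprime to $p$, so that $f(n)=\pm p^{k}d$ with $p\nmid d$ and $k$ equal to the $p$-adic valuation of $f(n)$. Replacing $d$ by this possibly smaller value only relaxes the constraint $\beta-d\geq n\geq\alpha+d$, since decreasing $d$ preserves both $\alpha+d\leq n$ and $n\leq\beta-d$, and the coprimality $\gcd(p^{k},|f'(n)|)=1$ survives unchanged. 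In the prime case the same choice of $d$ makes $|f(n)|/d=p$ exactly when $p$ occurs to the first power, returning us to $k=1$ with no derivative condition to check.

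Because all the analytic substance, namely the localization of the zeros forced by the Perron--type inequality and the ensuing factorization argument, is already carried by Theorem~\ref{th:4}, I anticipate no genuine obstacle here beyond this matching of $p$-adic data: the corollary is simply the clean specialization obtained by setting $k=1$ in the prime case and $\ell=1$ in the prime-power case.
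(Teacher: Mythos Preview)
Your proposal is correct and matches the paper's approach: the corollary is presented there without proof as the particular case of Theorem~\ref{th:4} obtained by taking $\ell=1$, and that is exactly what you do. Your patch for the possibility $p\mid d$ works cleanly in the prime-power case (shrinking $d$ only relaxes the inequalities and $p\nmid f'(n)$ is already given); in the bare prime case your final sentence does not actually close that gap when $p\mid d$, but the paper, like Theorem~\ref{th:4} itself, tacitly reads the hypothesis with $p\nmid d$, so nothing further is expected.
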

 \begin{corollary}\label{th:2b}
      Let $f=a_0+ a_{1}z+\cdots+a_m z^m\in \Bbb{Z}[z]$ be primitive. Suppose there exist  positive real numbers  $\alpha$ and $\beta$  with $\alpha<\beta$ and an index $j\in \{0,1,\ldots,m\}$ such that
\begin{eqnarray*}
|a_j|\alpha^j >\bigl({\beta}/{\alpha}\bigr)^{m-j}\sum_{i\neq j,~i=0}^{m} |a_i|\alpha^i.
\end{eqnarray*}
If there exist natural numbers $n$ and $d$ satisfying $\beta-d\geq n\geq \alpha+d$ for which  $|f(n)|/d$ is prime, or $|f(n)|/d$ is a prime-power coprime to $|f'(n)|$, then  $f$ is irreducible in $\mathbb{Z}[z]$.
 \end{corollary}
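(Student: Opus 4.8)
The plan is to deduce Corollary \ref{th:2b} directly from Theorem \ref{th:5}, of which it is the announced special case. The hypothesis on the coefficients, $|a_j|\alpha^j >(\beta/\alpha)^{m-j}\sum_{i\neq j,\,i=0}^{m}|a_i|\alpha^i$, together with the location constraint $\beta-d\geq n\geq\alpha+d$, is word for word that of Theorem \ref{th:5}. Hence nothing has to be re-established there, and the whole task reduces to showing that each of the two arithmetic alternatives in the corollary supplies admissible values of the parameters $p$, $k$, and $\ell\leq m$ occurring in Theorem \ref{th:5}. In both cases I will take $\ell=1$, so that $\gcd(k,\ell)=1$ holds automatically for every $k$, and the requirement $p^k\mid f^{(i)}(n)/i!$ for $0\leq i\leq\ell-1$ collapses to the single condition $p^k\mid f(n)$.

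Consider first the alternative that $|f(n)|/d$ is a prime power coprime to $|f'(n)|$, say $|f(n)|/d=p^k$ with $p\nmid f'(n)$. Writing $d=p^t d'$ with $p\nmid d'$ and $t\geq 0$, one gets $f(n)=\pm p^{k+t}d'$ with $p\nmid d'$; and since $d'\leq d$, the inequalities $\beta-d'\geq n\geq\alpha+d'$ are inherited from $\beta-d\geq n\geq\alpha+d$. Now I apply Theorem \ref{th:5} with this prime $p$, exponent $k+t$, the integer $d'$ in the role of $d$, and $\ell=1$: indeed $p^{k+t}\mid f(n)=f^{(0)}(n)/0!$, and in case $k+t>1$ the remaining condition $p\nmid f^{(1)}(n)/1!=f'(n)$ is precisely the assumed coprimality of $p^k$ to $|f'(n)|$. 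Theorem \ref{th:5} then delivers the irreducibility of $f$.

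Next consider the alternative that $|f(n)|/d$ is prime, say equal to $p$. This is the instance $k=1$: one has $f(n)=\pm p\,d$ with $p\nmid d$, and taking $\ell=1$ gives $\gcd(k,\ell)=1$ and $p\mid f(n)$, while the clause ``for $k>1$'' is vacuous, so no hypothesis on $f'(n)$ is required. Applying Theorem \ref{th:5} with $k=1$ and $\ell=1$ again yields the irreducibility of $f$.

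The one point that genuinely requires care is the standing requirement $p\nmid d$ in Theorem \ref{th:5}. In the prime-power alternative I secure it by pulling every factor of $p$ out of $d$ into the exponent $k+t$, which is harmless precisely because the resulting ``$k>1$'' clause asks only for $p\nmid f'(n)$, a consequence of the coprimality hypothesis; in the prime alternative the factorization $f(n)=\pm p\,d$ already exhibits $k=1$ and $p\nmid d$ (in particular always when $d=1$), and there the ``$k>1$'' clause never intervenes. Apart from this bookkeeping the deduction is mechanical, so I expect the only residual check to be that replacing $d$ by a divisor preserves the side inequalities $\beta-d\geq n\geq\alpha+d$, which it does, since both conditions only relax as $d$ decreases.
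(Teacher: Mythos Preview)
Your reduction to Theorem~\ref{th:5} with $\ell=1$ is exactly the paper's intended route (the paper gives no separate proof, merely announcing Corollaries~\ref{th:1}--\ref{th:2b} as the particular cases of Theorems~\ref{th:3}--\ref{th:5}), and your handling of the prime-power alternative via the extraction $d=p^t d'$ is clean and correct.

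There is, however, a slip in the prime alternative. You write ``one has $f(n)=\pm p\,d$ with $p\nmid d$,'' but nothing in the corollary's hypothesis rules out $p\mid d$; for instance $|f(n)|=18$, $d=6$, $|f(n)|/d=3$ is perfectly admissible. In that situation your own extraction trick would produce an exponent $t+1\geq 2$, and Theorem~\ref{th:5} would then demand $p\nmid f'(n)$---a condition that is \emph{not} available in the bare prime alternative. The repair is short: the coefficient inequality forces every root $\theta$ of $g(z)=f(z+n)$ to satisfy $|\theta|>d$, so a putative factorization $g=g_1g_2$ yields $|g_i(0)|\geq |c_i|\prod_\theta|\theta|>d$ for $i=1,2$, whence $pd=|g_1(0)|\,|g_2(0)|>d^2$, i.e.\ $p>d$; this is incompatible with $p\mid d$. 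Thus the missing sub-case is actually vacuous, but the bald assertion ``$p\nmid d$'' is unjustified and should be replaced by this one-line argument (or by simply noting that $p\mid d$ reduces to the already-treated case after observing $p>d$ is forced).
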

For some general results of this type for polynomials that have no roots in a given annular region we refer the reader to \cite[Lemma 1.1]{Bonciocat2009} and \cite[Lemma 2.1]{Bonciocat}. We will give in the last section of the paper a series of examples of polynomials whose irreducibility follows by our main results.
\section{Proofs of Theorems \ref{th:3}-\ref{th:5}}
To prove Theorems \ref{th:3}--\ref{th:5}, we will employ the techniques  used in \cite{Weisner},\cite{Bonciocat2009} and \cite{Bonciocat}. We will also need the following result proved in \cite[Lemma 3]{JSSK2021}:
\begin{lemmax}[Singh and Kumar \cite{JSSK2021}\label{L1}]
\em Let $f=a_0+ a_{1}z+\cdots+a_m z^m$; $f_1$ and  $f_2$ be nonconstant polynomials in $\Bbb{Z}[z]$ such that $f(z)=f_1(z)f_2(z)$. Suppose there exists a prime $p,$ coprime natural numbers $k\geq 2$ and $\ell\leq m$ such that $p^k\mid \gcd(a_0,a_1,\ldots,a_{\ell-1})$ and $p^{k+1}\nmid a_0$. If $p\mid f_1(0)$ and $p\mid f_2(0)$, then $p\mid a_\ell$.
 \end{lemmax}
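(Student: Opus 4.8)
The plan is to argue by contradiction: assume $p\nmid a_\ell$ and derive a contradiction with $\gcd(k,\ell)=1$ by comparing the $p$-adic Newton polygons of $f$, $f_1$, $f_2$. Write $f_1=\sum_i b_i z^i$ and $f_2=\sum_j c_j z^j$, and let $v_p(\cdot)$ denote the $p$-adic valuation. First I dispose of a degenerate case: if $p$ divides every coefficient of $f_1$ (or of $f_2$), then $p$ divides every coefficient of $f=f_1f_2$, so in particular $p\mid a_\ell$ and there is nothing to prove. Hence I may assume $\bar f_1\neq 0\neq\bar f_2$ in $\mathbb{F}_p[z]$.

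Next, let $u$ be the least index with $p\nmid b_u$ and $v$ the least index with $p\nmid c_v$. Since $p\mid f_1(0)=b_0$ and $p\mid f_2(0)=c_0$, both $u,v\ge 1$; set $s=v_p(b_0)\ge 1$ and $t=v_p(c_0)\ge 1$, so that $s+t=v_p(b_0c_0)=v_p(a_0)=k$. Writing $a_{u+v}=\sum_{i+j=u+v}b_ic_j$, every term with $i<u$ or $j<v$ is divisible by $p$, leaving $a_{u+v}\equiv b_uc_v\pmod p$ with $p\nmid b_uc_v$. Thus $u+v$ is the least index at which $p$ does not divide a coefficient of $f$. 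On the other hand, $p^k\mid a_i$ for $i<\ell$ makes every such $a_i$ divisible by $p$, so under the assumption $p\nmid a_\ell$ the index $\ell$ is itself the least index with $p\nmid a_\ell$. Comparing, I get $u+v=\ell$, and in particular $1\le u\le \ell-1$.

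The heart of the argument is the shape of the Newton polygons. Plotting the points $(i,v_p(a_i))$, the hypotheses $v_p(a_0)=k$, $v_p(a_i)\ge k$ for $1\le i\le \ell-1$, and $v_p(a_\ell)=0$ show that every intermediate point lies strictly above the segment joining $(0,k)$ to $(\ell,0)$, whose height at $x=i$ equals $k(\ell-i)/\ell<k$; hence the negative-slope part of the Newton polygon of $f$ is the single edge from $(0,k)$ to $(\ell,0)$, of slope $-k/\ell$. Since the edge-slope multiset (counted by horizontal length) of the Newton polygon of a product is the union of those of its factors, the negative-slope part of the Newton polygon of $f$ is the Minkowski sum of the negative-slope parts of those of $f_1$ and $f_2$; a Minkowski sum of two nondegenerate decreasing convex chains can equal a single edge of slope $-k/\ell$ only if each chain is itself a single edge of that slope. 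The negative-slope part of the Newton polygon of $f_1$ runs from $(0,s)$ to $(u,0)$, so being a single edge of slope $-k/\ell$ gives $-s/u=-k/\ell$, i.e.\ $\ell s=ku$. As $\gcd(k,\ell)=1$, this forces $\ell\mid u$, contradicting $1\le u\le \ell-1$. This contradiction yields $p\mid a_\ell$.

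I expect the main obstacle to be the last valuation step. Knowing merely the least nonzero index $u+v=\ell$ uses only divisibility by $p$ and is not enough; one must additionally exploit that the deeper congruences $p^k\mid a_i$ for $i<\ell$ pin the Newton polygon of $f$ down to a single edge of slope $-k/\ell$, and then transfer this rigidity to $f_1$ through the additivity of Newton polygons under multiplication. The coprimality $\gcd(k,\ell)=1$ is precisely what converts the resulting proportionality $\ell s=ku$ into the impossible divisibility $\ell\mid u$. An equivalent Newton-polygon-free version would instead track the inequalities $v_p(b_i)\ge s-(k/\ell)i$ directly and check that any strict inequality would push some $v_p(a_i)$ below $k$ for an index $i<\ell$; I anticipate this bookkeeping to be the most delicate part of a fully elementary write-up.
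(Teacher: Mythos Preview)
Your Newton polygon argument is correct. The paper itself does not prove this lemma --- it is quoted from \cite{JSSK2021} --- but in the subsequent Remark it shows the statement is equivalent (by passing to the reciprocal polynomial) to Lemma~\ref{L2} of Bonciocat et al., which those authors established precisely by a Newton polygon argument; your direct proof is therefore the same underlying method, carried out without the detour through reciprocals. One small point worth making explicit in a polished write-up is why the negative-slope portion of the Newton polygon of $f_1$ terminates exactly at $(u,0)$: once you know every negative-slope edge has slope $-k/\ell$ and the minimum valuation on the coefficients of $f_1$ is $0$, the endpoint $(h_1,0)$ of that portion is a vertex with $v_p(b_{h_1})=0$, while every $(i,v_p(b_i))$ with $i<h_1$ lies strictly above height $0$, forcing $h_1=u$.
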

For each $i=0,\ldots,m$, let $s_i=\frac{f^{(i)}(n)}{i!}$, and define
\begin{equation}\label{a2}
g(z)=f(z+n)=s_0+s_1z+\cdots+s_mz^m.
\end{equation}
In order to prove our results, 
it will be sufficient to show that $g$ is irreducible. According to 
the hypothesis of each of Theorems \ref{th:3} - \ref{th:5}, $p$ divides $s_i$ for each $i=0,\ldots,\ell-1$. Suppose on the contrary that $g(z)=g_1(z)g_2(z)$ for two nonconstant polynomials $g_1$ and $g_2$ in $\Bbb{Z}[z]$. In particular, this implies that
\begin{equation}\label{a4}
|g(0)|=|f(n)|=p^k d=|g_1(0)||g_2(0)|.
\end{equation}
Therefore, $p$ divides at least one of the factors $|g_1(0)|$ or $|g_2(0)|$. Assume without loss of generality that $p$ divides $|g_2(0)|$.
 \begin{proof}[\bf Proof of Theorem \ref{th:3}]
According to our hypothesis we have $|a_m| \alpha^m>\sum_{j=0}^{m-1}|a_j|\alpha^j$, so for a complex number $z$ with $|z|\geq \alpha$ we deduce that
\begin{eqnarray*}
|f(z)|\geq |z|^m\Bigl(|a_m|-\sum_{i=0}^{m-1}|a_i||z|^{-(m-i)}\Bigr)\geq \alpha^m\Bigl(|a_m|-\sum_{i=0}^{m-1}|a_i|\alpha^{-(m-i)}\Bigr)>0.
\end{eqnarray*}
This shows that each zero $\theta$ of $f$ must satisfy the inequality $|\theta|<\alpha$. We mention here that this is also a direct consequence of Rouch\'e' s Theorem.

\quad Consider first the case that $p$ also divides $|g_1(0)|$. Then $k>1$, $p$ divides each of $|g_1(0)|$ and $|g_2(0)|$, and $p$ divides $s_i$ for each $i=0,\ldots,\ell-1$. Since $p\nmid d$, it follows that $p^{k+1}\nmid |g(0)|$. We may now apply Lemma \ref{L1} to $g$, and conclude that $p\mid s_\ell$, which contradicts our hypothesis.

\quad Now consider the case when $p\nmid |g_1(0)|$, which implies that $p^k\mid g_2(0)$. From \eqref{a4}, we then find that $|g_1(0)|$ divides $d$, so in particular we have $|g_1(0)|\leq d$. If $c\neq 0$ is the leading coefficient of $g_1$, then we may write $g_1(z)=c\prod_{\theta}(z-\theta)$, with the product running over all the zeros $\theta$ of $g_1$.
If $g(\theta)=0$, then from \eqref{a2} we have $f(\theta+n)=0$, so $\theta +n$ must be a root of $f$, and hence we must have $|\theta+n|<\alpha$. Consequently, we have
\begin{eqnarray*}
|\theta| &=& |n-(n+\theta)|\geq n-|n+\theta|>n-\alpha\geq d,
\end{eqnarray*}
which yields $d\geq |g_1(0)|=|c|\prod_{\theta}|\theta|>|c|d^{\deg{g_1}}\geq |c|d\geq d$, a contradiction. This completes the proof of the theorem.
\end{proof}
\begin{proof}[\bf Proof of Theorem \ref{th:4}]
If $z\in \mathbb{C}$ is such that $\alpha\leq |z|\leq \beta$, then using our hypothesis on $|a_j|\alpha ^j$ we obtain
\begin{eqnarray*}
|f(z)|\geq |a_j||z|^{j}-\sum_{i\neq j,~i=0}^{m}|a_i||z|^i\geq |a_j|\alpha^{j}-\sum_{i\neq j,~i=0}^{m}|a_i|\beta^i>0,
\end{eqnarray*}
which shows that each zero $\theta$ of $f$ satisfies $|\theta|<\alpha$ or $\beta<|\theta|$.

\quad As in the preceding proof, if $p$ also divides $|g_1(0)|$, then $k>1$ and by applying Lemma \ref{L1} to $g$, we deduce again that $p\mid s_\ell$, which contradicts our hypothesis.

\quad Now consider the case when $|g_1(0)|$ is not divisible by $p$. From \eqref{a4}, we deduce again that $|g_1(0)|\leq d$.  If $g(\theta)=0$, then by \eqref{a2} we see that $\theta +n$ must be a root of $f$, and hence by the above conclusion on the roots of $f$ we either have $|\theta+n|<\alpha$, or $\beta< |\theta+n|$. As a consequence, we obtain the following inequalities.
\begin{eqnarray*}
|\theta| &=& |n-(n+\theta)|\geq n-|n+\theta|>n-\alpha\geq d,~\text{if}~|\theta+n|<\alpha,\\
|\theta| &=& |n+\theta-n|\geq |n+\theta|-n>\beta-n\geq d,~\text{if}~\beta<|\theta+n|.
\end{eqnarray*}
Thus $|\theta|>d$ for each zero $\theta$ of $g$, and in particular for each zero $\theta$ of $g_1$. Thus, if we write as before $g_1(z)=c\prod_{\theta}(z-\theta)$, we obtain
\[
d\geq |g_1(0)|=|c|\prod_{\theta}|\theta|>|c|d^{\deg{g_1}}\geq d,
\]
which is a contradiction. This finishes the proof.
\end{proof}
\begin{proof}[\bf Proof of Theorem \ref{th:5}]
Here we will prove that the inequality
\[
|a_j|\alpha^j >(\beta/\alpha)^{m-j}\sum_{i\neq j,~i=0}^m |a_i|\alpha^i
\]
forces the zeros $\theta$ of $f$ to satisfy $|\theta|<\alpha$, or $\beta<|\theta|$. Indeed, if $z\in \mathbb{C}$ is such that $\alpha\leq |z|\leq \beta$, then we have $\frac{1}{|z|^{m-j}}\geq \frac{1}{\beta^{m-j}}$ and $\frac{-1}{|z|^{m-i}}\geq \frac{-1}{\alpha^{m-i}}$. Using these inequalities along with our hypothesis on $|a_j|\alpha^j$, one obtains
\begin{eqnarray*}
|f(z)|\geq |z|^m\Bigl(\frac{|a_j|}{|z|^{m-j}}-\sum_{i\neq j,~i=0}^{m}\frac{|a_i|}{|z|^{m-i}}\Bigr)&\geq& \alpha^m\Bigl(\frac{|a_j|}{\beta^{m-j}}-\sum_{i\neq j,~i=0}^{m}\frac{|a_i|}{\alpha^{m-i}}\Bigr),\\
&=& \Bigl(|a_j|\alpha^j\Bigl(\frac{\alpha}{\beta}\Bigr)^{m-j}-\sum_{i\neq j,~i=0}^{m}|a_i|\alpha^i\Bigr)>0,
\end{eqnarray*}
which shows that each zero $\theta$ of $f$ satisfies $|\theta|<\alpha$ or $\beta<|\theta|$. The rest of the proof follows as in Theorem \ref{th:4}.
\end{proof}
\begin{remark}
By using a Newton polygon argument, Bonciocat et al. \cite[Lemma 1.4]{Bonciocat2013} proved the following lemma, which turns out to be equivalent to Lemma \ref{L1}, that was independently proved later by the authors in \cite{JSSK2021}.
\begin{lemmax}[Bonciocat et al. \cite{Bonciocat2013}]\label{L2}
\em Let $f, g\in \mathbb{Z}[X]$ be two polynomials with $\deg g=n$ and $\deg f=n-d$, $d\geq 1$. Let also $p$ be a prime number that divides none of the leading coefficients of $f$ and $g$, and let $k$ be any positive integer prime to $d$. If $f(X) + p^kg(X)$ may be written as a product of two non-constant polynomials with integer coefficients, say $f_1$ and $f_2$, then one of the leading coefficients of $f_1$ and $f_2$ must be divisible by $p^k$.
\end{lemmax}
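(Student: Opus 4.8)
The plan is to prove this lemma, as the authors indicate, through a Newton polygon argument with respect to the prime $p$. Write $F(X)=f(X)+p^kg(X)=\sum_{i=0}^n c_iX^i$, and recall that the $p$-adic Newton polygon $\mathrm{NP}(F)$ is the lower convex hull of the points $(i,v_p(c_i))$, where $v_p$ denotes the $p$-adic valuation. First I would read off the relevant portion of $\mathrm{NP}(F)$ directly from the hypotheses. Since $\deg f=n-d<n=\deg g$, for $i>n-d$ only $p^kg$ contributes, so $c_i=p^kg_i$ and $v_p(c_i)\ge k$, with equality at the top because $p\nmid g_n$ gives $v_p(c_n)=k$. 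For $i\le n-d$ we have $c_i=f_i+p^kg_i$, and since $p\nmid f_{n-d}$ while $k\ge1$, the bottom of this range satisfies $v_p(c_{n-d})=0$. Hence $(n-d,0)$ is a vertex, every plotted point lies at height $\ge0$, and the rightmost edge $E$ of $\mathrm{NP}(F)$ runs from $(n-d,0)$ to $(n,k)$ with slope $k/d$. Because $\gcd(k,d)=1$, this slope is in lowest terms, so $E$ is a primitive segment with no interior lattice points, and it is the unique edge of maximal slope.

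Next I would invoke the multiplicativity of Newton polygons: if $F=f_1f_2$ over $\mathbb{Z}\subseteq\mathbb{Z}_p$, then the multiset of edge slopes of $\mathrm{NP}(F)$, each weighted by its horizontal length, is the disjoint union of those of $\mathrm{NP}(f_1)$ and $\mathrm{NP}(f_2)$. The edge of slope $k/d$ in $\mathrm{NP}(F)$ therefore arises from edges of that same slope in $f_1$ and $f_2$ whose horizontal lengths sum to $d$. The key structural point is that any edge of slope $k/d$ joins two lattice points, so in lowest terms its horizontal length must be a multiple of $d$; as the two lengths are nonnegative multiples of $d$ summing to $d$, one of them is $d$ and the other is $0$. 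Thus, relabelling if necessary, $f_1$ carries the entire slope-$k/d$ edge (length $d$, rise $k$) and $f_2$ has no edge of that slope. Since the slopes of $\mathrm{NP}(f_1)$ form a subset of those of $\mathrm{NP}(F)$, the value $k/d$ is the maximal slope of $\mathrm{NP}(f_1)$, so this edge is precisely the rightmost edge of $\mathrm{NP}(f_1)$.

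Finally I would extract the leading coefficient valuation. The right endpoint of $\mathrm{NP}(f_1)$ is $(\deg f_1,\,v_p(\lambda_1))$, where $\lambda_1$ is the leading coefficient of $f_1$; travelling back along the rightmost edge (slope $k/d$, horizontal length $d$) lands at the vertex $(\deg f_1-d,\,v_p(\lambda_1)-k)$. This vertex is itself one of the plotted points of $f_1$, and since $f_1\in\mathbb{Z}[X]$ all such heights are $\ge0$, forcing $v_p(\lambda_1)\ge k$. On the other hand $v_p(\lambda_1)+v_p(\lambda_2)=v_p(\mathrm{lead}(F))=k$ with $v_p(\lambda_2)\ge0$, so in fact $v_p(\lambda_1)=k$ and $v_p(\lambda_2)=0$; that is, $p^k$ divides the leading coefficient of $f_1$, which is the desired conclusion.

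The main obstacle, and the only place where any real subtlety enters, is the length-divisibility step: one must argue carefully that the primitivity of the slope (coming from $\gcd(k,d)=1$) forces the slope-$k/d$ contributions of the two factors to be exactly $0$ and $d$ rather than splitting further, since without the coprimality the edge could break across both factors and the leading coefficient of neither would need to absorb the full power $p^k$. Everything else is bookkeeping with the valuations of the coefficients of $F$, and I would note that each hypothesis, namely $p\nmid\mathrm{lead}(f)$, $p\nmid\mathrm{lead}(g)$, $d\ge1$, and $\gcd(k,d)=1$, is used exactly once in the argument above.
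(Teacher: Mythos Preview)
Your Newton polygon argument is correct and self-contained: the identification of the rightmost edge of $\mathrm{NP}(F)$ as the segment from $(n-d,0)$ to $(n,k)$, the multiplicativity of slope multisets under factorisation, the indivisibility step forced by $\gcd(k,d)=1$, and the extraction of $v_p(\lambda_1)\ge k$ from nonnegativity of the vertex height are all sound.

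However, this is not the route the present paper takes. The paper does not reprove Lemma~\ref{L2} via Newton polygons; it only records that Bonciocat et al.\ did so in \cite{Bonciocat2013}, and then establishes Lemma~\ref{L2} indirectly by showing it is equivalent to Lemma~\ref{L1} (restated for reciprocals as Lemma~\ref{L3}). Concretely, the paper assumes for contradiction that $f+p^kg=f_1f_2$ with \emph{both} leading coefficients divisible by $p$, writes out the coefficients $a_i$ of $f+p^kg$, and invokes Lemma~\ref{L3} (with $n$ in place of $m$ and $d$ in place of $\ell$) to conclude $p\mid a_{n-d}=b_{n-d}+p^kc_{n-d}$, forcing $p\mid b_{n-d}$ against the hypothesis $p\nmid \mathrm{lead}(f)$. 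So what you have written is essentially the original \cite{Bonciocat2013} proof rather than the paper's own derivation. Your approach is self-contained and does not rely on Lemma~\ref{L1} as a black box; the paper's approach, by contrast, is designed precisely to exhibit that Lemmas~\ref{L1} and~\ref{L2} are two faces of the same statement, which is the point of that remark.
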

We show that Lemma \ref{L1} is  Lemma \ref{L2} in disguise, written for the reciprocal of $f$. To see this equivalence, we will first prove that Lemma \ref{L2} implies Lemma \ref{L1}. We proceed as follows.

Let $f(z)=f_1(z)f_2(z)$ be as in Lemma \ref{L1}, and let $\tilde{f}(z)=z^mf(1/z)=b_0+b_1z+\cdots +b_mz^m$, with $b_i=a_{m-i}$ for $i=0,1,\dots ,m$. Thus $p^k\mid \gcd (b_{m-\ell +1},b_{m-\ell +2},\dots ,b_m)$, $p^{k+1}\nmid b_m$ and $\tilde{f}(z)=\tilde{f_1}(z)\tilde{f_2}(z)$. Besides, the leading coefficient of $\tilde{f_1}(z)$ is $f_1(0)$ and the leading coefficient of $\tilde{f_2}(z)$ is $f_2(0)$. To prove Lemma \ref{L1}, assume on the contrary that $p\nmid a_{\ell }$. Thus $p\nmid b_{m-\ell }$. If we write
\begin{eqnarray*}
b_0+b_1z+\cdots +b_mz^m=b_0+b_1z+\cdots +b_{m-\ell }z^{m-\ell }+p^k(b'_{m-\ell +1}z^{m-\ell +1}+\cdots +b'_mz^m)
\end{eqnarray*}
with $b'_i=\frac{b_i}{p^k}\in \mathbb{Z}$ for $i=m-\ell +1,\dots ,m$ (since $p^k\mid \gcd (b_{m-\ell +1},b_{m-\ell +2},\dots ,b_m)$), we actually may write $\tilde{f}(z)=F(z)+p^kG(z)$ with
\begin{eqnarray*}
F(z) & = & b_0+b_1z+\cdots +b_{m-\ell }z^{m-\ell } \quad \quad {\rm and}\\
G(z) & = & b'_{m-\ell +1}z^{m-\ell +1}+\cdots +b'_mz^m,
\end{eqnarray*}
with $p\nmid b_{m-\ell}$ and $p\nmid b'_m$. We may apply now Lemma \ref{L2} with $d=\ell $ to the polynomial $F+p^kG$ and conclude that one of the leading coefficients of $\tilde{f_1}(z)$ and $\tilde{f_2}(z)$ must be divisible by $p^k$, so the other one can not be divisible by $p$. This contradicts the assumption that both $f_1(0)$ and $f_2(0)$ are divisible by $p$.

Conversely, assume that Lemma \ref{L1} holds, so if we state it for the reciprocal of $f$, we have:
\begin{lemmax}[Singh and Kumar \cite{JSSK2021}\label{L3}]
\em Let $f = a_0+a_1z+\cdots+a_mz^m$, $f_1$ and $f_2$  be nonconstant polynomials in $\mathbb{Z}[z]$ such that $f(z)=f_1(z)f_2(z)$. Suppose there exists a prime $p$, coprime
natural numbers $k\geq 2$ and $\ell\leq m$ such that $p^k\mid \gcd(a_m, a_{m-1},\dots, a_{m-\ell+1})$ and $p^{k+1}\nmid a_m$. If the leading coefficients of $f_1$ and $f_2$ are both divisible by $p$, then $p\mid a_{m-\ell }$.
\end{lemmax}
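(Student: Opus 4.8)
The plan is to obtain Lemma~\ref{L3} from Lemma~\ref{L1} by passing to the reciprocal polynomial, thereby reversing the bookkeeping used in the forward implication above. Given $f=a_0+a_1z+\cdots+a_mz^m$ with a factorization $f=f_1f_2$ as in the statement, I would set $\tilde f(z)=z^mf(1/z)=b_0+b_1z+\cdots+b_mz^m$, so that $b_i=a_{m-i}$, and put $\tilde f_i(z)=z^{\deg f_i}f_i(1/z)$ for $i=1,2$; then $\tilde f=\tilde f_1\tilde f_2$ and, as already noted in the forward direction, the constant term of $\tilde f_i$ equals the leading coefficient of $f_i$.

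The core of the argument is then a routine matching of hypotheses. Under $b_i=a_{m-i}$, the condition $p^k\mid\gcd(a_m,a_{m-1},\dots,a_{m-\ell+1})$ becomes $p^k\mid\gcd(b_0,b_1,\dots,b_{\ell-1})$, and $p^{k+1}\nmid a_m$ becomes $p^{k+1}\nmid b_0$; these are exactly the coefficient hypotheses of Lemma~\ref{L1} for $\tilde f$ with the same prime $p$ and the same coprime pair $k,\ell$. Likewise, the assumption that $p$ divides the leading coefficients of both $f_1$ and $f_2$ becomes $p\mid\tilde f_1(0)$ and $p\mid\tilde f_2(0)$. Lemma~\ref{L1} applied to $\tilde f$ then gives $p\mid b_\ell$, that is $p\mid a_{m-\ell}$, which is the desired conclusion.

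The one genuine obstacle is that reciprocation can lower degree, so I must confirm that Lemma~\ref{L1} is actually applicable before quoting it. When $a_0\ne 0$ and neither factor is a monomial, $\tilde f$ has full degree $m$ and both $\tilde f_1,\tilde f_2$ are nonconstant, so the lemma applies verbatim. The degenerate cases I would settle directly, checking that the conclusion is then automatic: if $a_t$ is the lowest nonvanishing coefficient of $f$ with $t>m-\ell$, then $a_{m-\ell}=0$ and $p\mid a_{m-\ell}$ trivially; and if some factor is a monomial, say $f_1=cz^{d}$ (the only way $\tilde f_1$ can collapse to a constant), then $f=cz^{d}f_2$ shows that $a_{m-\ell}$ is either $0$ or a multiple of $c$, while the hypothesis gives $p\mid c$, so again $p\mid a_{m-\ell}$. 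Thus every case yields the conclusion of Lemma~\ref{L3}, completing the reciprocal restatement of Lemma~\ref{L1}.
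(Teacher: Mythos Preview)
Your approach is exactly the paper's: Lemma~\ref{L3} is obtained from Lemma~\ref{L1} by passing to the reciprocal polynomial, and the paper in fact states this in a single line without any of the detail you supply. Your treatment is more careful than the paper's; the only small lacuna is that your case split omits the situation $0<t\le m-\ell$ with neither factor a monomial (so $a_0=0$ but $a_{m-\ell}$ need not vanish and no factor is $cz^d$), but there Lemma~\ref{L1} still applies directly to $\tilde f$, now of degree $m-t\ge \ell$, and yields $p\mid b_\ell=a_{m-\ell}$ just as in your main case.
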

We will prove now that Lemma \ref{L3} implies Lemma \ref{L2}. So let $f$ and $g$ be as in Lemma \ref{L2}. Assume that $f(z)=b_0+b_1z+\cdots +b_{n-d}z^{n-d}$ and $g(z)=c_0+c_1z+\cdots +c_nz^n$, say, with $p\nmid b_{n-d}c_n$. Then we may write
\begin{eqnarray*}
f+p^kg & = & a_0+a_1z+\cdots +a_nz^n\\
& = & (b_0+p^kc_0)+(b_1+p^kc_1)z+\cdots +(b_{n-d}+p^kc_{n-d})z^{n-d}\\
&  & +p^kc_{n-d+1}z^{n-d+1}+\cdots +p^kc_nz^n.
\end{eqnarray*}
Now assume on the contrary that $f(z) + p^kg(z)=f_1(z)f_2(z)$ and that both leading coefficients of $f_1$ and $f_2$ are multiples of $p$. By Lemma \ref{L3} with $n$ instead of $m$ and $d$ instead of $\ell $ we conclude that $p\mid a_{n-d}$, that is $p\mid (b_{n-d}+p^kc_{n-d})$, which implies that $p\mid b_{n-d}$, in contradiction to the assumption of Lemma \ref{L2} that $p$ divides none of the leading coefficients of $f$ and $g$.

Therefore, Lemma \ref{L1} and Lemma \ref{L2} are equivalent.
\end{remark}
\section{Examples}
We will give now some examples of classes of polynomials whose irreducibility may be proved by using Theorems \ref{th:3}-\ref{th:5}.
\begin{example}
Let $F_1$ be the polynomial defined by
\begin{equation*}
F_1=p^\ell+p^{\ell-1}z+p^{\ell-2}z^2+\ldots+pz^{\ell-1}-\ell z^{\ell}\pm (p^{k} d)z^m,
\end{equation*}
where $\ell$, $k\geq m+\ell$, $m>\ell$, and $d$ are positive integers, and $p\geq 1+d$ is a prime number with $\gcd(p,\ell)=1$, $\gcd(k,\ell)=1$. Here,  $a_i=p^{\ell-i}$ for $i=0,\ldots,\ell-1$, $a_\ell=-\ell$, $a_i=0$ for $i=\ell+1,\ldots,m-1$, and $a_m=\pm p^k d$. Taking $\alpha=1$, $n=p$, we have
\begin{eqnarray*}
|a_m|\alpha^m=p^{k}d\geq p^{m+\ell+1}&=&\underbrace{p^m+p^m+\cdots+p^m }_{p^{\ell+1}\text{-times}}\\
&>& (p^\ell+p^{\ell-1}+p^{\ell-2}+\cdots+p^1)+\ell=\sum_{i=0}^{m-1}|a_i|\alpha^i,
\end{eqnarray*}
where $n=p\geq 1+d=\alpha+d$. Since $\gcd(p,\ell)=1$, the polynomial $F_1$ is primitive, and  $|F_1(p)|/d=p^{k+m}$, where
 $p$ divides ${F_1}^{(i)}(p)/i!$ for $i=0,\ldots,\ell-1$, and ${F_1}^{(\ell)}(p)/\ell! =-\ell$, which is assumed to be coprime to $p$. By Theorem \ref{th:3}, the polynomial $F_1$ is irreducible in $\mathbb{Z}[z]$.
\end{example}
\begin{example}
Consider the polynomial
\begin{equation*}
F_2=p^m +(p^{\ell-1}z+p^{\ell-2}z^2+\ldots+pz^{\ell-1})-(\ell-1)z^{\ell}\pm (p^{k-j} d)z^j-z^m,
\end{equation*}
where $\ell\geq 2$, $j>\ell$, $k\geq 2(m+\ell)+j$, $m\geq j+1$ and $d$ are positive integers, and $p\geq 1+d$ is a prime number with $\gcd(p,\ell-1)=1$, $\gcd(k,\ell)=1$. Here, $a_0=p^m$, $a_i=p^{\ell-i}$ for $i=1,\ldots,\ell-1$, $a_\ell=-(\ell-1)$, $a_i=0$ for $i\neq j$ and $i\in \{\ell+1,\ldots,m-1\}$,  $a_j=\pm p^{k-j}d$, and $a_m=-1$. Taking $\alpha=1/p$, $\beta=p^2$, and $n=p$, we have
\begin{eqnarray*}
|a_j|\alpha^j=p^{k}d\geq p^{2m+2\ell}&=&\underbrace{p^{2m}+p^{2m}+\cdots+p^{2m}}_{p^{2\ell}\text{-times}}\\
&>& p^m+(p^{\ell+1}+p^{\ell+2}+\cdots+p^{2\ell-1})+(\ell-1)p^{2\ell}+p^{2m}\\
&=&\sum_{i\neq j,~i=0}^{m}|a_i|\beta^i,
\end{eqnarray*}
where $\beta-d=p^2-d>p=n\geq 1+d>\alpha+d$. Further, the polynomial $F_2$ is primitive, and  $|F_2(p)|/d=p^{k}$, where
 $p$ divides ${F_2}^{(i)}(p)/i!$ for $i=0,\ldots,\ell-1$, and ${F_2}^{(\ell)}(p)/\ell! =-(\ell-1)$, which is assumed to be coprime to $p$. So, by Theorem \ref{th:4}, the polynomial $F_2$ is irreducible in $\mathbb{Z}[z]$.
\end{example}
 \begin{example}
For $k\geq m+2\geq 4$ and prime $p$ with $n=p^2>p+d$, $1\leq d\leq p(p-1)$;  the polynomial
    \begin{eqnarray*}
F_3=-p^2+z\pm p^{k-m}dz^m
    \end{eqnarray*}
  satisfies the hypothesis of Corollary \ref{th:1} with
  \begin{eqnarray*}
  \alpha&=&p,~a_0=-p^2,~a_1=1, a_i=0~(i=2,3,\ldots,m-1),~a_m=\pm p^{k-m} d,\\
  n&=&p^2\geq p+d=\alpha+d,~F_3(p^2)=\pm p^{k+m} d,~F_3'(p^2)\equiv 1\mod p,
  \end{eqnarray*}
so that $\gcd(|F_3(p^2)|/d,|F_3'(p^2)|)=1$, and
  \begin{eqnarray*}
|a_m|\alpha^m=p^{k}d\geq p^3>p^2+p=\sum_{i=0}^{m-1}|a_i|\alpha^i.
    \end{eqnarray*}
So, the polynomial $F_3$ is irreducible in $\mathbb{Z}[z]$.
 \end{example}
 \begin{example}
Consider the polynomial
    \begin{eqnarray*}
F_4=(z-p)+(z-p)^2+\cdots+(z-p)^{m-1}\pm (p^{2k-2}d)z^m,
    \end{eqnarray*}
for $k\geq m\geq 2$ and $p\geq 1+d$. Here, $a_i=\sum_{j=i}^{m-1} {j\choose i}(-p)^{j-i}$ for $i=0,1,\ldots, m-1$. Further, $a_m=\pm p^{2k-2} d$, $\alpha=1$, and  $n=p\geq 1+d$. We find that $F_4(p)=\pm p^{2k+m-2} d$, $F_4'(p)\equiv 1\mod p$. These along with the fact that $p^2>1+p$ yield the following:
  \begin{eqnarray*}
|a_m|\alpha^m=\frac{p^{2k}d}{p^2}\geq \frac{(p^2)^m d}{p^2}>\frac{(1+p^2)^m}{p^2}>(1+p^2)\frac{(1+p^2)^{m-1}-1}{1+p^2-1}>\sum_{i=0}^{m-1}|a_i|\alpha^i.
    \end{eqnarray*}
Since $a_{m-1}=1$, it follows that $F_4$ is a primitive polynomial. By Corollary \ref{th:1}, the polynomial  $F_4$ is irreducible in $\mathbb{Z}[z]$.
 \end{example}
\begin{example}
Consider the polynomial
\begin{equation*}
F_5=(p^m-p)+z\pm (p^{1+2m+j} d)z^j-z^m,~1\leq d\leq p-1,~2\leq j\leq m-1,
\end{equation*}
where $j$, $m\geq 3$ and $d$ are positive integers and $p$ is a prime number. Here, $a_0=p^m-p$, $a_1=1$, $a_i=0$ for $i\not\in\{0,1,j,m\}$,  $a_j=\pm p^{1+2m+j}d$, and $a_m=-1$. Taking $\alpha=1/p$, $\beta=p^2$, and $n=p$, we have
\begin{eqnarray*}
|a_j|\alpha^j&=&p^{1+2m}d> p^m-p+p^2+p^{2m}=\sum_{i\neq j,~i=0}^{m}|a_i|\beta^i,\\
\alpha+d&<& 1+d\leq n=p< p^2-d=\beta-d.
\end{eqnarray*}
Further, the polynomial $F_5$ is primitive, and  $|F_5(p)|/d=p^{1+2(m+j)}$ and ${F_5}'(p)\equiv 1\mod p$. By Corollary \ref{th:2}, the polynomial $F_5$ is irreducible in $\mathbb{Z}[z]$.
\end{example}
\begin{example}
Consider the polynomial
\begin{equation*}
F_6=(p^m-p)+z\pm (p^{k} d)z^j-z^m;~1\leq d\leq p-1,~2\leq j\leq m-1,
\end{equation*}
where $k\geq 3m-2j$, $m\geq 3$, $j$, and $d$ are positive integers and $p$ is a prime number. Here, $a_0=p^m-p$, $a_1=1$, $a_i=0$ for $i\not\in\{0,1,j,m\}$,  $a_j=\pm p^{k}d$, and $a_m=-1$. Taking $\alpha=1$, $\beta=p+d$, and $n=p$, we have
\begin{eqnarray*}
|a_j|\alpha^j&=&p^{k}d\geq p^{3m-2j}>(p+d)^{m-j}(p^m-p+2)=\bigl({\beta}/{\alpha}\bigr)^{m-j}\sum_{i\neq j,~i=0}^{m}|a_i|\alpha^i,\\
\alpha+d&=& 1+d\leq n=p=\beta-d.
\end{eqnarray*}
Further, the polynomial $F_6$ is primitive, and  $|F_6(p)|/d=p^{k+j}$ and ${F_6}'(p)\equiv 1\mod p$. By Corollary \ref{th:2b}, the polynomial $F_6$ is irreducible in $\mathbb{Z}[z]$.
\end{example}
\subsection*{Disclosure statement}
The authors report there are no competing interests to declare.
\bibliographystyle{ams}

\end{document}